\newcommand*{\rom}[1]{\expandafter\@slowromancap\romannumeral #1@}
\theoremstyle{definition}
\newcommand{\N}{\ensuremath{\mathbb{N}}}
\newtheorem{fact}{fact}
\newtheorem{example}[fact]{Example}
\newtheorem{thm}[fact]{Theorem}
\newtheorem{corollary}[fact]{Corollary}
\newtheorem{defini}[fact]{Definition}
\newtheorem{rem}[fact]{Remark}
\title{On the value group of a model of Peano Arithmetic}
\author{Merlin Carl}
\address{FB Mathematik \& Statistik, Universit\"at Konstanz,
Germany}
 \email{merlin.carl@uni-konstanz.de}
\author{Paola D'Aquino}
\address{Dipartimento di Matematica, Seconda Universit\`a di Napoli, Italy}
\email{paola.daquino@unina2.it}
\author{Salma Kuhlmann}
\address{FB Mathematik \& Statistik, Universit\"at Konstanz,
Germany}
 \email{salma.kuhlmann@uni-konstanz.de}
\date{\today}
\subjclass[2000]{ Primary: 06A05, 12J10, 12J15, 12L12, 13A18;
Secondary: 03C60, 12F05, 12F10, 12F20.} \keywords{Left
exponentiation, natural valuation, value group, residue field,
valuation rank, power series fields,  maximally valued fields,
ordered fields, integer parts, recursive saturation.}
\begin{document}
\begin{abstract}
We investigate $IPA$ - real closed fields, that is, real closed fields
which admit an integer part whose non-negative cone is a model of
Peano Arithmetic. We show that the value group of an $IPA$ - real closed
field is an exponential group in the residue field, and that the
converse fails in general. As an application, we classify (up to
isomorphism) value groups of countable recursively saturated
exponential real closed fields. We exploit this characterization to
construct countable exponential real closed fields which are not
$IPA$ - real closed fields.
\end{abstract}

\maketitle

\markright{ON THE VALUE GROUP OF A MODEL OF PEANO ARITHMETIC}

\section{Introduction}
\noindent
A real closed field ($RCF$)  is a model of $Th(\mathbb{R})$, the first order theory of  $\mathbb{R}$ in the  language of  ordered rings  $\mathcal L=\{ +,\cdot,0,1,<\}$. An integer part of a real closed field $R$ is a subring of $R$ that generalizes in a natural way the relation between the integers and the reals.
\begin{defini}

An \emph{integer part} ($IP$) for an ordered  field $R$ is a discretely ordered subring $Z$ such that for each $r\in R$, there exists some $z\in Z$ with $z\leq r < z+1$. This $z$ will be denoted by $\lfloor r\rfloor$.

\end{defini}

If $R$ is an ordered field, then $R$ is Archimedean if and only if
$\mathbb{Z}$ is (the unique) integer part.  We will consider only
non-Archimedean  real closed fields. Integer parts provide an
approach for building connections between real algebra and  models
of fragments of Peano Arithmetic ($PA$). Shepherdson
\cite{shepherdson} showed that  a discretely ordered ring $Z$ is an
integer part for some real closed field if and only if its
non-negative cone, i.e. $Z^{\geq 0}$ is a model of  $IOpen$ (see
below). Every real closed field has an integer part, see \cite{mr}.
However, an  integer part for $R$ need not be  unique, not even up
to elementary equivalence.
 Indeed it is difficult to provide algebraic criteria for integer parts to satisfy
properties beyond $IOpen$ (such as cofinality of primes, normality), especially
if they are to be constructed as recursive structures see \cite{Moniri}, and \cite{BKK} for an extensive discussion.


\smallskip

We recall that $PA$  is the first order theory in the
language $\mathcal L=\{ +,\cdot,0,1,<\}$  of discretely ordered
commutative rings with $1$ of which non-negative cone satisfies  the
induction axiom for each  first order formula. Fragments of $PA$ are
obtained by restricting the induction scheme to certain classes of
formulas. The following subsystems will be relevant for this paper:
\begin{enumerate}
 \item $IOpen$ (open induction) denotes induction restricted to open formulas, i.e. formulas that do not contain quantifiers;
 \item $I\Delta_{0}$ (bounded induction) denotes induction restricted to bounded formulas, i.e. formulas in which all quantifiers are bounded by terms of the language;
 \item $I\Delta_{0}+EXP$ is $I\Delta_{0}$ with an extra axiom stating that the exponential function is total. It is known that $I\Delta_{0}+EXP$ is a proper extension
of $I\Delta_{0}$, for details  see  \cite[Chapter V.3]{hp}.
\end{enumerate}

\smallskip

\noindent

In what follows we will not make any distinction between  the discrete ordered ring and its non-negative cone as a model of $PA$ or one of its fragments. For convenience we introduce the following definition.


\begin{defini}
A non-Archimedean real closed field which
admits an integer part which is a model of $PA$ is
called an $IPA$ -  real closed field, or $IPA - RCF$. Any such $IP$ for the $IPA - RCF$ at hand is called an $IPA$ for it.
\end{defini}


For an extensive exposition on  models of $PA$ see \cite{k}. In this paper we investigate the valuation theoretical properties of
any $IPA - RCF$. Our main result Theorem  \ref{expgp} establishes that their value
groups (with respect to the natural valuation) satisfy strong valuation theoretical conditions (see Definitions \ref{lexp} and \ref{Exponential group}).
\begin{thm}
\label{expgp} Let $K$ be a non-Archimedean  $IPA-RCF$. Then $K$ admits left exponentiation and the
value group $G$  of $K$ is an exponential group in the additive
group $(\overline{K}, +, 0,<)$.
\end{thm}
Theorem
\ref{expgp} provides a method for constructing real closed fields
with no $IPA$, see Section \ref{section3}. Theorem \ref{expgp} has
many implications on the valuation theoretic structure of $IPA - RCF$.
In particular, non-Archimedean $IPA-RCF$s cannot be maximally valued,
see Corollary \ref{pownoipa}. Corollary \ref{pownoipa} can be
interpreted as an arithmetical counterpart to \cite[Theorem 1]{kks}
stating that no maximally valued non-Archimedean real closed field
admits left exponentiation. We remark that in  the above results
$PA$ can be replaced by its proper fragment $I\Delta_0+EXP$, see
Corollary \ref{leftexp}. The converse of Theorem  \ref{expgp} fails,
even in the countable case. Indeed in Section \ref{section4} we give
a characterization of countable recursively saturated exponential
groups - see Corollary \ref{countableexprecsatgp} - and use it to
construct countable exponential real closed fields which are not
$IPA-RCF$s, see Example \ref{monoexample}.

\smallskip

We conclude by the following question:  is every  model $Z$ of $PA$ the integer part of a real closed exponential field ($K$, exp)?
As we observe in Corollary \ref{shep} the natural candidate for $K$ is the real closure of the field of fractions of $Z$, which is an $IPA - RCF$.
By Theorem  \ref{expgp} we know that this real closed field admits a left exponential function.  However it is unknown to us whether every real closed field which admits a left exponential function also admits
a total exponential function. These questions will be investigated in future work.
\section{Valuation-theoretical Preliminaries}\label{section 2}

\label{preliminaries}

We first summarize some background on divisible ordered abelian groups (see \cite{K} and \cite{K2}). Let  $(G, +, 0, <)$ be a divisible  ordered  abelian group, i.e., an ordered $\mathbb{Q}$-vector space.  For any $x\in G$, let \mbox{$|x|=\max \{x,-x\}$.}  For non-zero $x, y \in G$, we define $x\sim y$ if there exists $n \in \N$ such that $n|x| \geq |y| $ and $ n|y| \geq |x|. $ We write $x\ll y$ if $n|x| < |y|$ for all $n \in \N$. Clearly,  $\sim$ is an equivalence relation, and $[x]$ denotes the equivalence class of any non-zero $x\in G$.
Let \mbox{$\Gamma : = \{[x] : x \in G\setminus\{ 0\} \}$.} We  define a total  order  $<$ on $\Gamma$  as follows $[y]\, <[x] $ if  $x\ll y$.
 We call $\Gamma$ the rank of $G$ (or the value set of $G$). The map
$v \colon G\  \to\ \Gamma \cup \{\infty\}$ defined by $v(0):= \infty$ and $v(x)= [x]$ if $x \neq 0$
is  called the natural valuation on $G$.  For every $\gamma \in \Gamma$, fix $x\in \gamma$ and choose a maximal Archimedean subgroup $B_{\gamma}$ of $G$ containing $x$.  We call $B_{\gamma}$  the  Archimedean component associated to $\gamma$. For each
$\gamma$, $B_{\gamma}$ is isomorphic to an ordered subgroup of $(\mathbb R, +,0,<).$

\smallskip

Now we recall analogous notions for real closed fields. Any  real
closed  field \mbox{$(K,+,\cdot ,0,1,<)$} has a natural valuation
$v$, that is the natural valuation associated with the divisible
ordered abelian  group $(K,+ ,0,<~)$. We define an addition on the
value set $v(K^\times)$ by setting \mbox{$v(x)+v(y)$} equal to
$v(xy)$ for all $x, y\in K^\times$.  The value set, equipped with
this addition, is a divisible ordered abelian group.    We call
$G=v(K^\times)$ the value group of $K$.

\smallskip
\noindent We now recall some basic notions and fix notations  relative to the natural valuation $ v$.

\noindent
{\bf Notations.}

\noindent $R_{v}:=\{ a\in K: v(a) \geq 0  \}$ is the valuation ring. Note that $(R_{v},+,0,<)$ is a divisible convex subgroup of $(K,+,0,<)$.

\noindent
$\mu_v:=\{ a\in K: v(a) > 0  \}$ the valuation ideal.

\noindent
$\mathcal{U}^{>0}_{v}:=\{ a\in K^{>0}: v(a) = 0  \}$ the group of positive units of $R_{v}$. Note that $(\mathcal{U}^{>0}_{v},\cdot , 1, <)$ is a divisible subgroup of $(K^{>0},\cdot , 1, <)$.

\noindent
$\overline{K}:=R_{v}/\mu_v$, the residue field of $K$,
which is an Archimedean field, i.e. isomorphic to a subfield of
$\mathbb R$.

\begin{rem}
\label{convexhull}
The valuation ring $R_v$ is the convex hull of $\mathbb Z$ in the ordered field $K$. It is straightforward to verify that $Z\cap R_v= \mathbb Z$ for any integer part $Z$ of $K$.
\end{rem}

\begin{defini}
\label{HahnGroup} (i) Let $\Gamma$ be a linearly ordered set and $\{
B_{\gamma}:\gamma \in \Gamma \}$ be a family of (additive)
Archimedean groups. The {\it Hahn group (Hahn sum)}
$G=\oplus_{\gamma \in \Gamma} B_{\gamma}$  is the group of functions
$f: \Gamma \rightarrow \cup_{\gamma \in \Gamma}B_{\gamma}$ where
$f(\gamma)\in B_{\gamma}$, $\mbox{supp } (f)=\{ \gamma \in \Gamma:
f(\gamma )\not=0 \}$ is finite, endowed with componentwise addition
and the lexicographical ordering.  Note that the rank of $G$ is
$\Gamma$ and the Archimedean components of $G$ are $\{
B_{\gamma}:\gamma \in \Gamma \}$.
\par
(ii) Let $G$ be an ordered abelian group and $k$ an Archimedean
field. Then $k((G))$ is the {\it field of generalized power series
over $G$ with  coefficients in $k$}. That is, $k((G))$ consists of
formal expressions of the form $\Sigma_{g\in G}c_{g}t^{g}$, where
$c_{g}\in k$ and $\{g\in G:c_{g}\neq 0\}$ is well-ordered. Again,
the addition is pointwise, the ordering is lexicographic and the
multiplication is given by the convolution formula.

\end{defini}


\section{Left exponentiation on $IPA-RCF$ }\label{section3}

\noindent Let $(K,+,\cdot ,0,1,<)$ be a real closed field. Observe
that $(K,+,0,<)$ and $(K^{>0},\cdot,1,<)$ are divisible ordered
abelian groups, i.e. ordered  $\mathbb Q$-vector spaces. Therefore
we can   lexicographically decompose them as
$$(K,+,0,<)=  {A}\oplus R_{v}\mbox{ and } (K^{>0},\cdot,1,<) =  B\cdot \mathcal{U}_{v}^{>0},$$
where $A$ is a (vector space) complement of $R_{v}$ and $B$  is a (vector space) complement of $\mathcal{U}_{v}^{>0}.$

\medskip

\noindent We are now in a position to introduce the following

\begin{defini}
\label{lexp} A real closed field $K$ is said to {\it admit left
exponentiation} if there is an order preserving group isomorphism
from a complement ${A}$ of $R_{v}$ in $(K,+,0,<)$ onto a
complement $B$ of $\mathcal{U}_{v}^{>0}$ in $(K^{>0},\cdot,1,<)$.\\
\end{defini}

The existence of  left exponentiation on $K$  imposes very restrictive conditions on its value group.
These value groups were studied extensively in \cite{kks} and \cite{K}:

\begin{defini}
\label{Exponential group} Let $G$ be an ordered abelian group with
rank $\Gamma$ and Archimedean components $\{ B_{\gamma }: \gamma \in
\Gamma\}$. Let $C$ be an Archimedean  group. We say that $G$ is an
{\it exponential group in $C$} if $\Gamma$ is  isomorphic (as linear
order) to the negative cone $G^{<0}$, and each $B_{\gamma}$ is isomorphic
(as ordered group) to $C$.
\end{defini}
We shall need in Section \ref{section4} the following
characterization of countable divisible exponential groups
\cite[Theorem 1.42]{K}:
\begin{thm}\label{countableexpgp}
 A countable divisible ordered abelian group $G\not=\{0\}$ is an exponential
group
 if and only if $G$ is isomorphic to the Hahn sum $\displaystyle{
\oplus_{\mathbb{Q}}C}$ for some countable archimedean group $
C\not=\{0\}$.
\end{thm}
\noindent In the proof of Theorem \ref{expgp} we will make implicit
use of the general properties of the exponential function on a model
of $PA$ which we summarize here. The graph of the
exponential function over $\mathbb{N}$ is definable by an
$\mathcal{L}$-formula $E(x,y,z)$, which stands for $x^{y}=z$. For
convenience we fix the base $2$  for exponentiation. We use $E(x,y)$
for $E(2,x,y)$. $PA$ proves the following  properties: \noindent
\begin{enumerate}
 \item $E(0,1)$, $E(1,2)$ and $\forall{x}\exists!y E(x,y)$.
\item $\forall{x}\forall{y}((x\geq 1\wedge E(x,y))\rightarrow (y\geq x+1 ))$
\item $\forall{a,b,c,d}((E(a,c)\wedge E(b,d))\rightarrow E(a+b,cd))$
\item $\forall{a,b,c,d}((E(a,c)\wedge E(b,d)\wedge a<b)\rightarrow c<d)$
 ``the exponential is strictly increasing''
\item $\forall x\exists!y\exists z (E(y,z)\wedge z\leq x<2z)$
 ``each element lies between two successive powers of $2$''

\end{enumerate}
\noindent
The above properties imply that over any model of $PA$
there is a function exp defined by the formula $E(x,y)$ which
behaves as the exponential function over $\mathbb N$, and when
restricted to $\mathbb N$ is the usual exponential function in base
$2$.
In what follows we also write exp$(m)=2^m$.
\begin{rem}
\label{fragment} It is known that there is a $\Delta_0$-formula
defining the graph of the exponential function in $\mathbb N$ for
which the above properties of exponentiation, except the totality,
are provable in $I\Delta_0$. These properties give an invariant
meaning to exponentiation in every non standard model of $I\Delta_0$
(see \cite[Chapter V.3]{hp}).  $EXP$ denotes the axiom
$\forall{x}\exists!y E(x,y)$.
\end{rem}

In Lemma 1.3 of \cite{MSS}, the authors show that if the non-Archimedean field $\mathbb{R}((G))$ is an $IPA-RCF$, then it would admit exponentiation. Theorem \ref{expgp} stated in the Introduction is a generalization of their result.
We are now ready to prove it.

\begin{proof}
We first show that $K$ admits left exponentiation. Let $Z$ be an
integer part of $K$ where $Z:=-M\cup M$ and $M$ is a model of $PA$.
The idea of the proof is to extend the exponential function defined
on $M$ first to $Z$ and then to the divisible hull $\mathbb
QZ:=\{qm:q\in\mathbb{Q}\wedge m\in Z\}$ of $Z$ using the rules of
exponentiation. Exploiting a direct sum decomposition of $\mathbb
QZ:=A\oplus \mathbb Q$, we get an exponential function exp on $A$ by
restriction. We will then show that $A$ is a complement to $R_v$ and
$B:=$exp$(A)$ a complement to $\mathcal U_v^{>0}$.

\medskip

\noindent Without loss of generality we fix a $\mathbb Q$-basis
$X\cup\{1\}\subset Z$ of $\mathbb QZ$ with $X\subset
Z\setminus{\mathbb Z}$. Set $A:=\langle X \rangle_{\mathbb Q}$, so
that $\mathbb QZ:=A\oplus \mathbb Q$. From Remark \ref{convexhull}
we infer that $\mathbb QZ\cap R_v=\mathbb Q$. Therefore for all
$0\not=a\in A$ we have $v(a)<0$.

\medskip

\noindent Extend  exp from $M$ to   $Z=-M\cup M$ (with values in the fraction field of $Z$)
in the obvious way by setting  exp$(-m):=\frac{1}{\mbox{exp}(m)}$
for $m\in M$.

\noindent Since $(K^{>0},\cdot,1,<)$ is divisible, we can define
exp$:\mathbb QZ\rightarrow K^{>0}$ by setting
$\mbox{exp}(\frac{m}{n}):=\mbox{exp}(m)^{\frac{1}{n}}$ for $m\in Z$,
$0<n\in\mathbb{N}$. Set $B:=\mbox{exp}(A)$.

\medskip

\noindent It is straightforward to verify that exp is strictly
increasing and a group homomorphism. Since $\mbox{exp}(n)=2^n$ for
$n\in \mathbb N$, we have by construction that $\mbox{exp}(z) \in
\mathbb Q$ for $z\in \mathbb Z$. It follows that for $q\in \mathbb
Q$ we have $\mbox{exp}(q)\in \mathcal{U}_{v}^{>0}$ since e.g.
$2^{\lfloor q \rfloor} \leq \mbox{exp}(q) < 2^{\lfloor q \rfloor +
1}$. We observe that if $0\not=a\in A$ then $v(a)<0$. It follows
that, for $a\in A$, if $a>0$ then $v(\mbox{exp}(a))<0$ and if $a<0$ then
$v(\mbox{exp}(a))>0$, so in any case $v(\mbox{exp}(a))\not=0$, so
$\mbox{exp}(a) \notin \mathcal{U}_{v}^{>0}$.

\medskip

\noindent \emph{Claim $1$}: $A$ is a complement of $R_{v}$ in $(K,+,0,<)$.

\noindent We first prove that $K=A+R_v$. Let $x\in K$. If $x\in
R_{v}$ there is nothing to prove, so assume that $x\in
K^{\times}\backslash R_{v}$.
 Write $\lfloor x\rfloor= \alpha + q\cdot 1$, with $\alpha \in A$ and $q\in\mathbb{Q}$. So $\lfloor x\rfloor\in A+R_{v}$. Since  $0\leq x-\lfloor x\rfloor<1$
 we have $x-\lfloor x\rfloor\in[0,1[\subset R_{v}$. Hence, $x=\lfloor x\rfloor+(x-\lfloor x\rfloor)\in A+R_{v}$.

\noindent Now let $0\not=x\in A\cap R_{v}$, then $v(x) <0$ and
$v(x)\geq 0$, a contradiction.

\medskip

\noindent \emph{Claim $2$}: $B$ is a complement of
$\mathcal{U}_{v}^{>0}$ in $(K^{>0},\cdot,1,<)$.

\noindent We first prove that $K^{>0}=B\cdot\mathcal{U}_{v}^{>0}$.
Let $x\in K^{>0}$. If $x\in \mathcal{U}_{v}^{>0}$, there is nothing
to prove, so assume that $x\notin \mathcal{U}_{v}^{>0}$. Without
loss of generality, let $x>1$, so $\lfloor x\rfloor \in M$. Since
$M\models PA$, by Axiom (4) there exists $m\in M$ such that
$\mbox{exp}(m)\leq \lfloor x\rfloor < \mbox{exp}(m+1)$. From these
inequalities it follows e.g. that $\mbox{exp}(m)\leq
x<2^{2}\mbox{exp}(m)$, so $1\leq \frac{x}{\mbox{exp}(m)}<4$. So,
$\frac{x}{\mbox{exp}(m)}\in \mathcal U_v^{>0}$. As $m\in M$, write
$m=\alpha+q$, where $\alpha \in A$, $q\in\mathbb{Q}$. Then
$\mbox{exp}(m)=\mbox{exp}(\alpha) \mbox{exp}(q)$. Now
$\mbox{exp}(q)\in \mathcal{U}_{v}^{>0}$, so
$\frac{x}{\mbox{exp}(\alpha)}\in \mathcal U_v^{>0}$ and $x\in B\cdot
\mathcal{U}_{v}^{>0}$.
Finally, we verify that $\mathcal{U}_{v}^{>0}\cap B=\{1\}$. But this
is straightforward by the observation just before \emph{Claim $1$}.

\smallskip

\noindent In \cite[Proposition 1.22 and Corollary 1.23]{K},  it is
shown that for a non-Archimedean real closed field $K$, if $K$
admits left exponentiation, then the  value group of $K$ is an
exponential group in $C:=(\overline{K}, +, 0,<)$. This completes the
proof of the theorem.
\end{proof}

\begin{rem}
\label{noexpgp}

\noindent

1) In \cite[Corollary 2, p. 266]{Mend}, it is shown that $\mathbb{Z}$  cannot be a direct
summand of the additive group  of a non standard model of true
Arithmetic. The proof is easily seen to actually yield the same conclusion for models of $PA$.\footnote{In fact, it is shown in \cite{Ca} that this already holds
for models of the much weaker theory $IE_{2}$. Moreover, in an email to the first author, Emil Jerabek sketched an argument that this already holds for $IE_{1}$.}
So to define the required complement $A$ in the proof
of Theorem \ref{expgp} we rather use the fact that $\mathbb{Q}$ is a
direct summand of the divisible hull $\mathbb{Q}Z$ of $Z$.

\noindent

2) Note that in the above proof we may replace the hypothesis that
$K$ is real closed by the assumption that $(K^{>0}, \cdot)$ is
divisible, that is $K$ is root closed for positive elements.

\noindent

3) The converse of Theorem \ref{expgp} fails, even in the countable
case, see Section \ref{section4}, Example  \ref{monoexample}.

\noindent

4) The rank of a divisible exponential group $G\not= \{0\}$ is a
dense linear order without endpoints (since the linear order
$G^{<0}$ is such). There are plenty of divisible ordered abelian
groups that are not exponential groups in $(\overline{K}, +, 0,<)$.
For example, take the Hahn group $G=\oplus_{\gamma \in \Gamma}
B_{\gamma}$ where the Archimedean components $B_{\gamma}$ are
divisible but not all isomorphic and/or $\Gamma$ is not a dense
linear order without endpoints (say, a finite $\Gamma$).
Alternatively, we could choose all Archimedean components to be
divisible and all isomorphic, say to $C$, and $\Gamma$ to be a dense
linear order without endpoints, but choose the residue field so that
$(\overline{K},  +,0,<)$ is not isomorphic to $C$.
\end{rem}

\medskip

\noindent As an  application of Theorem \ref{expgp}, we present a
class of subfields  of power series fields that are not $IPA - RCF$.

\medskip

\noindent

\begin{example}\label{aclass}
[A class of not $IPA$ real closed fields:] Let $k$ be any real closed
field. Let $G\not=\{0\}$ be any divisible ordered
abelian group which is {\it not} an exponential group in
$(k,+,0,<)$, see Remark \ref{noexpgp} (4). Consider the field of power
series $k((G))$ and its subfield $k(G)$ generated by $k$ and $\{
t^g:g\in G\}$. Let $K$ be any real closed field satisfying
$$k(G)^{rc} \subseteq K \subseteq k((G))$$
where $k(G)^{rc}$ is the real closure of $k(G)$. Any such $K$ has
$G$ as value group and $k$ as residue field. By Theorem \ref{expgp},
$K$ is not an $IPA-RCF$.
\end{example}
\medskip

\noindent

The reader will also have noticed that the full power of $PA$ is by
far not used in the proof of Theorem \ref{expgp}. All that is needed
is a theory $T$ of arithmetic strong enough to prove arithmetical
facts (1)-(5) about the exponential function (see Remark
\ref{fragment}). Hence

\begin{corollary} \label{leftexp} Let $(R, +,\cdot , 0,1,<) $  be a non-archimedean  real closed field.
Assume that $R$ admits an integer part  which is a model of
$I\Delta_0+EXP$. Then $R$ admits a left exponential function.
\end{corollary}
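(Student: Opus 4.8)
The plan is to observe that Corollary \ref{leftexp} is essentially a rerun of the proof of Lemma \ref{MainLemma}, so that the only real task is to check that nothing in that argument exceeds the deductive strength of $I\Delta_0+EXP$. First I would set $M:=Z^{\geq 0}$, the non-negative cone of the given integer part, which by our standing convention is the model of $I\Delta_0+EXP$, so that $Z=-M\cup M$. Because $R$ is non-Archimedean, $\mathbb{Z}$ is not an integer part of $R$, whence $Z\neq\mathbb{Z}$; equivalently, choosing any $x\in R$ with $x>n$ for all $n\in\mathbb{N}$ forces $\lfloor x\rfloor$ to be a nonstandard element of $M$. Hence $M$ is nonstandard, and the hypotheses of Lemma \ref{MainLemma} are met, except that $M\models I\Delta_0+EXP$ in place of $M\models PA$.

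Next I would go through the proof of Lemma \ref{MainLemma} and locate every appeal to the arithmetic of $M$, matching each to one of the facts (1)--(5) from the start of this section. Concretely: the totality clause of (1) is what lets us define $exp'$ on all of $Z$; the recursion clause of (1) supplies $Exp(m+1)=2\,Exp(m)$; fact (4) is used to trap a given $x\in R$ between two consecutive powers of $2$ in Claim $3$, Subclaim (i); fact (3) gives the homomorphism property in Claim $2$, Subclaim (ii); and fact (5) gives strict monotonicity, hence injectivity, of $\exp$. Since these are the only arithmetic inputs, it suffices to verify that $I\Delta_0+EXP$ proves all of (1)--(5). This is exactly the content recorded above: facts (2), (3), (5) and the recursion clause of (1) are already theorems of $I\Delta_0$ for the $\Delta_0$-graph $E_0$, while the axiom $EXP$ supplies the missing totality in (1), which in turn underwrites the existential assertion in (4). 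Thus every step of the proof of Lemma \ref{MainLemma} is available over $M$.

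Finally, I would note the two places where the hypothesis on $R$ enters: real-closedness is used only to guarantee that $(R^{>0},\cdot)$ is divisible, so that the roots $x^{1/n}$ needed to define $\exp$ on the $\mathbb{Q}$-vector space $H$ exist, and this is precisely the weakening pointed out in the remark following Lemma \ref{MainLemma}, valid for any real closed field. With $M$ nonstandard, $(R^{>0},\cdot)$ divisible, and (1)--(5) available, the construction of $A$, $B$ and the isomorphism $\exp\colon A\to B$ goes through verbatim, yielding a left exponential function on $R$. I do not expect a genuine obstacle here; the only point requiring care is the bookkeeping that the existence statement in fact (4), namely that every element lies between two successive powers of $2$, really does need the totality furnished by $EXP$ and is not a consequence of $I\Delta_0$ alone.
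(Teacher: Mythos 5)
Your proposal is correct and is essentially the paper's own proof: the paper derives Corollary \ref{leftexp} from exactly this observation, namely that the proof of Lemma \ref{MainLemma} uses only the arithmetical facts (1)--(5) (plus divisibility of $(R^{>0},\cdot)$, for which real closedness suffices), and these are all available in any nonstandard model of $I\Delta_0+EXP$. The only inaccuracy is your closing caveat: as the paper states (citing \cite{hp}), every one of (1)--(5) \emph{except} the totality clause of (1) --- including the trapping statement (4), whose witnesses are bounded by $x$ and can be obtained by $\Delta_0$-maximization applied to the set of powers of $2$ below $x$ --- is provable already in $I\Delta_0$, so $EXP$ is needed solely for totality; this does not affect your argument, which only requires provability in $I\Delta_0+EXP$.
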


Below we gather a few other consequences of Theorem \ref{expgp}. We
obtain the following exponential analogue to one direction of
Shepherdson's theorem  \cite{shepherdson} cited in the introduction.
For the notion of  exponential integer part appearing below and a proof of Ressayre's theorem that every exponential $RCF$ has an exponential $IP$, see
\cite{DKKL}.
\begin{corollary}\label{shep}
If $M\models PA$, then $M$ is an exponential integer part of a left
exponential real closed field.
\end{corollary}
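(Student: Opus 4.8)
The plan is to obtain $K$ by Shepherdson's construction and then read off left exponentiation directly from Lemma \ref{MainLemma}, so that the only genuinely new point is the verification of the word \emph{exponential} in ``exponential integer part''. First I would record that, since $PA$ extends $IOpen$, the discretely ordered ring $Z := -M \cup M$ has non-negative cone $M \models IOpen$. By (the relevant direction of) Shepherdson's theorem \cite{shepherdson}, such a $Z$ is an integer part of a real closed field; concretely one may take $K := \operatorname{Frac}(Z)^{rc}$, the real closure of the fraction field of $Z$ equipped with the order extending that of $Z$. This exhibits $M = Z^{\geq 0}$ as the non-negative cone of an integer part of a genuine $RCF$.

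Next I would invoke Lemma \ref{MainLemma}. We may assume $M$ is nonstandard, the case $M = \mathbb{N}$ being trivial (then $Z = \mathbb{Z}$, $K$ is Archimedean, and one may simply take $K = \mathbb{R}$ with its real exponential). For nonstandard $M$, Lemma \ref{MainLemma} applies verbatim to the integer part $Z = -M \cup M$ of $K$ and produces a left exponential on $K$: the isomorphism $\exp|_A \colon A \to B$ between a group complement $A$ of $R_v$ in $(K,+,0,<)$ and a group complement $B$ of $\mathcal{U}_v^{>0}$ in $(K^{>0},\cdot,1,<)$. Hence $K$ is a left exponential real closed field, which is one half of the assertion.

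It then remains to verify that $M$ is an \emph{exponential} integer part in the sense of \cite{DKKL}. The key observation is that the map $\exp$ built in the proof of Lemma \ref{MainLemma} restricts on $M$ to $M$'s own internal exponential: by construction $\exp(m) = \exp'(m) = \mathrm{Exp}(m) = 2^m$ for every $m \in M$, since the base-$2$ exponential $x \mapsto 2^x$ is total on $M$ with values in $M$ (as $M \models PA$). Thus $M = Z^{\geq 0}$ is closed under the field's exponential, and together with the fact that $Z$ is already an integer part this gives precisely the compatibility conditions required of an exponential integer part. I expect the main obstacle to be exactly this last step: one must check that the abstractly constructed left exponential of $K$ and the arithmetically given exponential of $M$ coincide on $M$ as functions into $K$, and that this coincidence matches, on the nose, the definition of exponential integer part in \cite{DKKL}. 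Everything else is a routine splicing of Shepherdson's theorem with Lemma \ref{MainLemma}.
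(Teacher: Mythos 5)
Your proof is correct and follows essentially the same route as the paper: Shepherdson's theorem applied to $Z=-M\cup M$ to realize $M$ as (the non-negative cone of) an integer part of $K:=ff(Z)^{rc}$, followed by Lemma \ref{MainLemma} to obtain left exponentiation on $K$. Your extra care -- splitting off the trivial case $M=\mathbb{N}$ and checking explicitly that the map $\mbox{exp}$ constructed in Lemma \ref{MainLemma} restricts on $M$ to the internal base-$2$ exponential, which is what the term \emph{exponential} integer part in the sense of \cite{DKKL} requires -- is sound and simply makes explicit what the paper's own proof leaves implicit.
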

\begin{proof}
Let $Z=-M\cup M$ and consider $K:=ff(Z)^{rc}$, the real closure of
the fraction field of $Z$. Then
$Z$ is an integer part of $K$ by the mentioned result of Shepherdson. As
remarked above since $M$ is a model of $PA$, it has a total
exponential function.
By Theorem \ref{expgp}, $K$ has left exponentiation.\\
\end{proof}
In \cite{kks} it is shown that for no ordered field $k$ and ordered abelian group $G\neq
\{0\}$ does the field $k((G))$ admit a left exponential
function. Therefore, if we take $k=\mathbb{R}$, we can deduce from Theorem \ref{expgp} the following result, without
any further assumptions on $G$.

\begin{corollary}
\label{pownoipa} For any divisible ordered abelian group $G\neq
\{0\}$ the field $\mathbb R((G))$ is not an $IPA- RCF$.
\end{corollary}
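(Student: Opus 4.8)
The plan is to derive the statement directly, by contradiction, from Lemma \ref{MainLemma} together with the cited theorem of \cite{kks}. First I would verify that $\R((G))$ really is a non-Archimedean real closed field, so that the framework applies at all: since $\R$ is real closed and $G$ is divisible, the field of generalized power series $\R((G))$ is real closed (this is the standard criterion for generalized power series fields), and it is non-Archimedean precisely because $G\neq\{0\}$, for any $g\in G^{>0}$ gives the infinitesimal $t^{g}$ whose inverse is infinite. Thus $\R((G))$ is a legitimate candidate for being an $IPA$-$RCF$.

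Next I would suppose, toward a contradiction, that $\R((G))$ does admit an $IPA$, i.e. an integer part $Z$ whose non-negative cone $M=Z^{\geq 0}$ is a model of $PA$. Because $\R((G))$ is non-Archimedean, $Z$ contains infinite elements, so $M$ is a \emph{nonstandard} model of $PA$ and $Z=-M\cup M$. Hence the hypotheses of Lemma \ref{MainLemma} are satisfied, and we conclude that $\R((G))$ admits a left exponentiation in the sense of Definition \ref{lexp}. Finally I would invoke \cite{kks}, which states that for no ordered abelian group $G\neq\{0\}$ does $\R((G))$ admit a left exponential function. This contradicts the previous step, so no integer part of $\R((G))$ can be a model of $PA$; that is, $\R((G))$ is not an $IPA$-$RCF$.

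There is essentially no genuine obstacle here, since all the substantive work has already been front-loaded into Lemma \ref{MainLemma} and into the cited \cite{kks} result; the corollary is a clean juxtaposition of the two. The only points demanding a moment of care are the verification that $\R((G))$ is a non-Archimedean real closed field (so that both the $IPA$-$RCF$ notion and Lemma \ref{MainLemma} are applicable) and the observation that the $PA$-model furnished by an $IPA$ is automatically nonstandard, which is exactly the shape of hypothesis Lemma \ref{MainLemma} requires; both are immediate. I would note in passing that one could equally run the argument through Corollary \ref{leftexp} rather than Lemma \ref{MainLemma}, since $PA$ proves $I\Delta_0+EXP$, which would simultaneously establish the analogous non-existence statement for integer parts modelling the weaker fragment $I\Delta_0+EXP$.
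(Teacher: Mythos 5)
Your proof is correct and follows essentially the same route as the paper's: both verify that $\mathbb{R}((G))$ is real closed, apply Lemma \ref{MainLemma} to conclude that an $IPA$ would yield left exponentiation, and contradict this with the theorem of \cite{kks}. The extra checks you supply (non-Archimedean since $G\neq\{0\}$, and that the $PA$-model is automatically nonstandard) are correct and are left implicit in the paper.
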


We also observe that the necessary condition on the value group
(appearing in Theorem \ref{expgp}) is not sufficient:

\begin{corollary}
\label{nosuff} Let $G$ be a divisible exponential group. Then there is a
real closed field $(K,+,\cdot,0,1,<)$ such that $v(K^{\times})=G$
but $K$ does not have an $IPA$.
\end{corollary}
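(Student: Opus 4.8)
The plan is to let the given group $G$ serve directly as the data of the construction, so that nothing needs to be built: I take $K := \mathbb{R}((G))$, the Hahn field of generalized power series over $G$ with real coefficients, with $G$ entering exactly as supplied by the hypothesis. Since $\mathbb{R}$ is real closed and $G$ is divisible, $K$ is a real closed field by the standard power-series criterion \cite{EP}. I would then record the two valuation-theoretic identifications that make $K$ a candidate: the natural valuation sends a nonzero series $\sum_{g} c_g t^g$ to the minimum of its well-ordered support, whence $v(K^\times) = G$, and the residue field is $\overline{K} = \mathbb{R}$ (via the constant coefficient), so that $(\overline{K}, +, 0, <) = (\mathbb{R}, +, 0, <)$.

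With $K$ so chosen, the absence of an $IPA$ is immediate from Corollary \ref{pownoipa}: since $G \neq \{0\}$ is a divisible ordered abelian group, $\mathbb{R}((G)) = K$ is not an $IPA$-$RCF$. This already delivers the literal assertion of the corollary — a real closed field with value group $G$ admitting no $IPA$. I would stress that exponentiality of $G$ plays no role in this step, the only inputs being divisibility and nontriviality of $G$.

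The hypothesis that $G$ is exponential enters only to make $K$ a genuine counterexample to the converse of Theorem \ref{expgp}. Taking $G$ exponential in $(\mathbb{R}, +, 0, <)$ — the Archimedean component realized as the residue field of $\mathbb{R}((G))$ — the value group of $K$ is an exponential group in $(\overline{K}, +, 0, <)$. Thus $K$ satisfies the conclusion of Theorem \ref{expgp} while failing to admit an $IPA$, so the condition that the value group be exponential in the residue field, although necessary by Theorem \ref{expgp}, is not sufficient for the existence of an $IPA$.

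I do not expect a substantive obstacle here, since Corollary \ref{pownoipa} already packages the hard content, namely the result of \cite{kks} ruling out left exponentiation on $\mathbb{R}((G))$ together with Lemma \ref{MainLemma}. The only items to verify are the routine identifications $v(K^\times) = G$ and $\overline{K} = \mathbb{R}$, and the harmless exclusion of the degenerate case $G = \{0\}$, which would collapse $K$ to the Archimedean field $\mathbb{R}$ and lies outside the scope of Corollary \ref{pownoipa} in any case.
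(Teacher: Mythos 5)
Your proposal is correct and takes essentially the same route as the paper, whose entire proof is to set $K=\mathbb{R}((G))$ and invoke Corollary \ref{pownoipa}. The extra details you supply (real closedness of $\mathbb{R}((G))$, the identifications $v(K^{\times})=G$ and $\overline{K}=\mathbb{R}$, and the observation that exponentiality of $G$ only matters for making $K$ a counterexample to the converse of Theorem \ref{expgp}) are exactly the routine verifications the paper leaves implicit.
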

\begin{proof}
Let $K=\mathbb R((G))$. By Corollary \ref{pownoipa}, $K$ cannot have
an $IPA$.
\end{proof}
Let $G$ be a divisible exponential group in $(\mathbb{R},+,0,<)$\footnote{For example, if $K$ is a non-Archimedean exponential field with residue field $\mathbb{R}$, then its value group $G$ satisfies this assumption, see \cite{K}.}. 
Applying Corollary \ref{nosuff}, we obtain in particular a real closed field $K$ with residue
field $\mathbb{R}$ such that its value group is exponential in $(\mathbb{R},+,0,<)$, but $K$ does not have an $IPA$.
\section{Countable recursively saturated exponential
groups}\label{section4} We now investigate recursively saturated
real closed fields in light of Theorem \ref{expgp}.

\smallskip

 \noindent
For convenience we recall a few definitions, and refer the reader to
[DKL] for more details. $L$ be a computable language. An
$L$-structure $M$ is {\it recursively saturated} if for every
computable set of $L$-formulas $\tau(x, \bar{y})$ and  every tuple
$\bar{a}$ in $M$ (of the same length as $\bar{y}$) such that
$\tau(x,\bar{a})$ is finitely satisfiable in $M$,
$\tau(x,\bar{a})$ is realized in $M$. A subset $\mathcal{T}\subset
2^{<\omega}$ is a {\it tree} if every substring of an element of
$\mathcal{T}$ is also an element of $\mathcal{T}$. If $\sigma,
\tau\in 2^{<\omega}$, we let $\sigma\prec\tau$ denote that $\sigma$
is a substring of $\tau$. A sequence $f\in2^{\omega}$ is a {\it
path} through a tree $\mathcal{T}$ if for all $\sigma\in
2^{<\omega}$ with $\sigma\prec f$, we have $\sigma\in \mathcal{T}$.
A nonempty set $S\subset \mathbb{R}$ is a {\it Scott set} if $S$ is
{\it computably closed} i.e., if $r_1,\ldots r_n\in S$ and
$r\in\mathbb{R}$ is computable from $r_1\oplus\ldots \oplus r_n$
(the {\it Turing join} of $r_1,\ldots, r_n$), then $r\in S$; and
every infinite tree $\mathcal{T}\subset 2^{<\omega}$ which is
computable in some $r\in S$ has a path that is computable in some
$r'\in S$.

\medskip

\noindent

Any Scott set is an Archimedean real closed field, i.e. a real closed
subfield of $\mathbb{R}$, but there exist countable real closed
subfields of $\mathbb{R}$ which are not Scott sets such as the real closed field of recursive reals. For further relevant information on Scott sets,
we refer the reader to \cite{dklm}.

\medskip

\noindent

$IPA$-real closed fields are recursively saturated \footnote{In fact, it is shown in \cite[ Theorem 3.1]{jk} that a real closed field which is not recursively saturated does not even have
an IP which is a model of the much weaker theory $IE_{2}$ in which the set $\{x^{n}:n\in\mathbb{N}\}$ is bounded
for every element $x$.}, and the converse
holds in the countable case \cite[Theorem 5.1 and 5.2]{dks}. On the
other hand, an algebraic characterization of recursively saturated
real closed fields (of any cardinality) is given in \cite{DKL}. In
particular, it is shown that the value group of a recursively
saturated real closed field is recursively saturated \cite[Theorem
5.2]{DKL}. We will need the following characterization of countable
recursively saturated divisible ordered abelian groups
\cite[Corollary 4.6]{DKL}:
\begin{thm}\label{countablerecsatgp}
A countable divisible ordered abelian group $G\not=\{0\}$ is
recursively saturated if and only if $G$ is isomorphic to the Hahn
sum $\displaystyle{ \oplus_{\mathbb{Q}}S}$ for some countable Scott
set $S$.
\end{thm}

An immediate consequence of Theorems \ref{countableexpgp} and
\ref{countablerecsatgp} is
\begin{corollary} \label{countableexprecsatgp}
 A countable divisible exponential group $G\not=\{0\}$ in $C$ is
recursively saturated if and only if $C$ is a (countable) Scott set.
\end{corollary}
We now construct a countable non Archimedean left exponential real
closed field which is not recursively saturated, or equivalently,
which does not admit an $IPA$.

\begin{example}\label{monoexample}
We let $G:=\displaystyle{ \oplus_{\mathbb{Q}}(C, +)}$, where $C$ is
a countable real closed subfield of $\mathbb{R}$ which is {\it not}
a Scott set. We set $F:=C(t^g\>; g\in G)$ and $K:=F^{\rm rc}$ its
real closure. By \cite[Corollary 1.43]{K} $K$ admits a left
exponential function. On the other hand, by \cite[Theorem 5.2]{DKL}
and Corollary \ref{countableexprecsatgp} $K$ is not recursively
saturated.
\end{example}

\section{Acknowledgements}

The authors thank the anonymous referee for various helpful suggestions on the presentation.

\end{document}